\newtheorem{thm}{Theorem}[section]
\newtheorem{prop}[thm]{Proposition}
\newtheorem{lem}[thm]{Lemma}
\newcommand{\rmand}{\quad\hbox{ and }\quad}
\makeatletter \@addtoreset{equation}{section} \makeatother
\title[On colored set partitions of type $B_n$]{On colored set partitions of type $B_n$}
\author{David G.L. Wang}
\address{Department of Mathematics, University of Haifa,
199 Aba Khoushy Ave. Mount Carmel, Haifa, 3498838, Israel}
\email{david.combin@gmail.com, wgl@math.haifa.ac.il}
\subjclass[2010]{05A16, 05A18}
\keywords{set partition; limiting distribution}
\begin{document}
\maketitle

\begin{abstract}
Generalizing Reiner's notion of set partitions of type $B_n$, we define colored $B_n$-partitions by coloring the elements in and not in the zero-block respectively. Considering the generating function of colored $B_n$-partitions, we get the exact formulas for the expectation and variance of the number of non-zero-blocks in a random colored $B_n$-partition. We find an asymptotic expression of the total number of colored $B_n$-partitions up to an error of $O(n^{-1/2}\log^{7/2}{n})$, and prove that the centralized and normalized number of non-zero-blocks is asymptotic normal over colored $B_n$-partitions.
\end{abstract}

\section{Introduction}\label{sec:intro}

It is well known that the centralized and normalized 
number of blocks over ordinary set partitions
is asymptotic normal, which is due to
Harper~\cite{Har67}.
There are various studies on the asymptotic behavior of 
set partitions. Basic notions and results
of limiting distributions can be found in~\cite{Sac97B,FS09B,Man13B}.
In this paper,
we focus on the limiting distribution of another set partition structure.

A partition of the set~$[n]=\{1,2,\ldots,n\}$ is 
a collection of pairwise disjoint sets $b_1$, $b_2$, $\ldots$, $b_m$ 
such that $\cup_{i=1}^mb_i=[n]$. 
The sets $b_i$
are called {\em blocks}.
In 1997, Reiner~\cite{Rei97} introduced the notion of {\em set partition of type $B_n$} ({\em $B_n$-partition} for short),
which was defined to be a partition~$\pi$ of the set
\[
[\pm n]=\{\,-n,\,\ldots,\,-2,\,-1,\,1,\, 2,\, \ldots,\, n\,\}
\]
such that for any block~$b$ of~$\pi$, $-b$~is also a
block of~$\pi$, and that $\pi$ contains at most one block $b$ 
satisfying $b=-b$. 
The block $b=-b$, if it exists, is called the {\em zero-block}.
We call the pair $\pm b$ of blocks 
a {\em block pair} of~$\pi$ if~$b$ is not the zero-block.

Most attention that has been paid to $B_n$-partitions are devoted to its lattice structure.
One of the main reasons seems to be the development in subspace arrangements.
A main question regarding subspace arrangements 
is to study the structure of the complement 
subspace~$\mathcal{C}_\mathcal{A}=V-\cup_{X\in\mathcal{A}}X$,
where~$V$ is a finite-dimensional vector space, and
$\mathcal{A}$ a subspace arrangement of~$V$.
The Goresky-MacPherson formula~\cite{GM88B} allows one 
to translate the problem of determining the homology groups
of~$\mathcal{C}_\mathcal{A}$ into the problem of studying certain groups 
related to the intersection lattice.
In fact,
the lattice of ordinary set partitions can be considered as the
intersection lattice for the hyperplane arrangement corresponding
to the root system of type~$A$, see~\cite{BB05,Hum90};
and the intersection lattice of the Coxeter arrangement of type~$B_n$ 
corresponds to $B_n$-partitions, see~\cite{BS96}.

Set partitions of type $B_n$ are also referred to as {\em signed partitions}.
In some papers, for example~\cite{Whi10D}, a $B_n$-partition is defined 
to be a partition of the set $[n]\cup\{0\}$ allowing barring
elements in~$[n]$ such that 
every element in the block containing the element $0$ is unbarred,
and that the minimum element in each block is unbarred.
The block containing $0$ is called the {\em zero-block}.
In fact, this definition for $B_n$-partitions is equivalent to the one given by Reiner.
This can be seen by identifying the block 
\[
\{0,\,a_1,\,a_2,\,\ldots,\,a_r\}
\]
containing $0$ with Reiner's zero-block 
\[
\{\pm a_1,\,\pm a_2,\,\ldots,\,\pm a_r\},
\]
and identifying the block 
\[
\{x_1,\,x_2,\,\ldots,\,x_s,\,\bar{y_1},\,\bar{y_2},\,\ldots,\,\bar{y_t}\}
\]
with Reiner's block pair 
\[
\pm\{x_1,\,x_2,\,\ldots,\,x_s,\,-y_1,\,-y_2,\,\ldots,\,-y_t\}.
\]
However, the wording ``the number of non-zero-blocks'' is therefore confusing
because one non-zero-block in the latter definition corresponds
to two non-zero-blocks in Reiner's definition.
To avoid this,
we will use the latter definition throughout.

Similar to unbarring the minimal elements, 
we generalize $B_n$-partitions by coloring its elements.
Let $n$, $m$ and $c$ be positive integers.
Let $\pi$ be a partition of the set~$[n]\cup\{0\}$.
Let $C_1$ be a list of~$c$ colors, and $C_2$ a list of~$m$ colors.
For any $x\in[n]\cup\{0\}$, let $b_x$ be the block of $\pi$ containing $x$.
Then~$b_0$ is the zero-block.
For any $x\in[n]$, we color $x$ in
\[
\begin{cases}
\text{the first color in $C_2$},& \text{if $x=\min b_x$};\\
\text{some color in $C_1$},& \text{if $x\ne\min b_x$ and $x\in b_0$};\\
\text{some color in $C_2$},& \text{if $x\ne\min b_x$ and $x\not\in b_0$}.
\end{cases}•
\]
In this way, every element $x\in[n]$ can be regarded as a pair $(x,cl)$,
where $cl$ denotes the color of~$x$.
We call the resulting partition a {\em $(c,m)$-colored $B_n$-partition},
and denote the set of $(c,m)$-colored $B_n$-partitions by $\Pi_{n,m,c}$.
For notation convenience, we can ignore 
the element~$0$ if $\pi$ contains the block~$\{0\}$.
For example, when $c=m=n=2$, 
if we denote $C_1=[c_1,c_2]$ and $C_2=[m_1,m_2]$,
then the set $\Pi_{2,2,2}$ consists of the following $11$ partitions:
\[
\begin{array}{llll}
(1,m_1)/(2,m_1) \quad
& (1,m_1),(2,m_1) \quad 
& (1,m_1),(2,m_2) \quad \\[5pt]
0,(1,c_1)/(2,m_1) \quad
& 0,(1,c_2)/(2,m_1) \quad
& 0,(2,c_1)/(1,m_1) \quad
& 0,(2,c_2)/(1,m_1) \quad\\[5pt]
0,(1,c_1),(2,c_1) \quad
& 0,(1,c_1),(2,c_2) \quad
& 0,(1,c_2),(2,c_1)\quad
& 0,(1,c_2),(2,c_2),
\end{array}
\]
where we use the slash symbol ``$/$'' to partition the blocks.

In particular,
the set of $(1,1)$-colored $B_n$-partitions has a transparent one to one 
correspondence with the set of ordinary partitions of $[n]\cup\{0\}$.
The $(1,2)$-colored $B_n$-partitions is another representation of Reiner's $B_n$-partitions.
More references can be found from Sloane's Online Encyclopedia of Integers Sequences (A039755).
In fact, as will be seen, 
the number of $(1,m)$-colored $B_n$-partitions with $k$~non-zero-blocks 
is the Whitney numbers $W_m(n,k)$ of the second kind, that is,
the number of elements of rank $k$ in the Dowling lattice $Q_n(G)$,
where $G$ is a finite group of order~$m$; see~\cite{Dow73,Ben96}.

In this paper, we will show that the centralized and normalized 
number of non-zero-blocks over colored $B_n$-partitions is asymptotic normal.
In particular, we get the asymptotic normality over set partitions of type~$B_n$.
A classical method to prove the normality of some limiting distribution is 
the following criterion; see~\cite[page 286]{Pit97} and~\cite{Ben73}.

Let $A=(a_0,a_1,\ldots,a_n)$ be a sequence of nonnegative numbers.
The sequence~$A$ is said to be a \emph{P\'olya frequency sequence} (PF sequence for short) of order~$r$,
if the infinite matrix $(a_{i-j})_{i,j\ge0}$ where $a_k=0$ for $k\notin\{0,1,\ldots,n\}$, is \emph{totally positive} of order $r$. 
For background on total positivity, see Karlin~\cite{Kar68B}.
It is well known that
$A$ is a PF sequence 
if the polynomial $A(z)=\sum_{k=0}^na_{k}z^k$ 
is either a positive constant or real-rooted; see~\cite{Sch55}.
Let $P$ denote the probability distribution on $\{0,1,\ldots,n\}$ defined by
normalization of~$A$. We call $P$ a PF distribution if~$A$ is a PF sequence.

\begin{thm}\label{thm:criterion}
Let~$(S_n)$ be a sequence of random variables such that $S_n$ has a PF distribution with mean $\mu_n$ and variance $\sigma_n^2$.
The asymptotic distribution of $(S_n-\mu_n)/\sigma_n$ is standard normal 
if and only if $\sigma_n\to\infty$ as $n\to\infty$.
\end{thm}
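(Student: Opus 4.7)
The plan is to exhibit $S_n$ as a sum of independent Bernoulli random variables and then apply a classical triangular-array central limit theorem. Since the distribution of $S_n$ is PF, its probability generating polynomial is either a positive constant (the trivial case $\sigma_n=0$) or real-rooted with nonnegative coefficients. In the latter case one can factor
\[
E[z^{S_n}] \;=\; \prod_{i=1}^{N_n}\bigl((1-p_{n,i})+p_{n,i}z\bigr)
\]
for some $p_{n,i}\in[0,1]$, so $S_n\stackrel{d}{=}\sum_{i=1}^{N_n} X_{n,i}$ with independent Bernoullis $X_{n,i}\sim\mathrm{Ber}(p_{n,i})$; in particular $\mu_n=\sum_i p_{n,i}$ and $\sigma_n^2=\sum_i p_{n,i}(1-p_{n,i})$.

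For the ``if'' direction I would invoke the Lindeberg--Feller CLT for the centered triangular array $\{X_{n,i}-p_{n,i}\}$. Because $|X_{n,i}-p_{n,i}|\leq 1$, for any fixed $\varepsilon>0$ and any $n$ with $\sigma_n>1/\varepsilon$ the truncation indicator $\mathbf{1}\{|X_{n,i}-p_{n,i}|>\varepsilon\sigma_n\}$ vanishes identically, so the Lindeberg condition holds as soon as $\sigma_n\to\infty$. This yields $(S_n-\mu_n)/\sigma_n\Rightarrow N(0,1)$.

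For the ``only if'' direction I would argue by characteristic functions. Writing $\varphi_n$ for the characteristic function of $(S_n-\mu_n)/\sigma_n$ and using that $S_n$ is integer valued, one has $|\varphi_n(2\pi\sigma_n)|=|E[e^{2\pi i S_n}]|=1$ for every $n$. If $\sigma_n$ failed to tend to infinity, one could pass to a subsequence along which $\sigma_n\to\sigma_*\in[0,\infty)$. The case $\sigma_*>0$ contradicts the convergence $\varphi_n(2\pi\sigma_n)\to e^{-2\pi^2\sigma_*^2}$ predicted by weak convergence to $N(0,1)$; the case $\sigma_*=0$ is handled separately by noting that $\sigma_n^2\to 0$ forces $S_n$ to concentrate on one or two integers near $\mu_n$, so the normalized law degenerates to a (degenerate or two-atom) discrete distribution, again incompatible with $N(0,1)$.

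The Bernoulli factorization and the Lindeberg--Feller step are entirely routine once Schoenberg's characterization of PF sequences is in hand; the main subtlety lies in the converse, where both the bounded-variance and vanishing-variance regimes must be excluded along subsequences. The criterion is classical, going back to Bender~\cite{Ben73}, and I would simply cite that source rather than reprove it in full.
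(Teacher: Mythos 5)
The paper never proves Theorem~\ref{thm:criterion}: it is stated as a known criterion and attributed to Pitman and Bender, so there is no internal proof to compare against, and your closing suggestion to simply cite \cite{Ben73} is in fact exactly what the paper does. That said, your sketch is the standard argument behind those citations and is essentially correct: Schoenberg's characterization lets you factor the real-rooted generating polynomial into linear factors with nonpositive roots, hence realize $S_n$ as a sum of independent Bernoulli variables; the Lindeberg condition is vacuously satisfied for bounded summands once $\varepsilon\sigma_n>1$, giving the forward direction; and the converse is correctly reduced to the two subsequential regimes $\sigma_*\in(0,\infty)$ and $\sigma_*=0$. The only soft spot is the $\sigma_*=0$ subcase, where ``concentrates on one or two integers'' should be made precise: Chebyshev gives $P(|S_n-\mu_n|\ge 1)\le\sigma_n^2\to0$, and then the normalized variable is supported on a lattice of spacing $1/\sigma_n\to\infty$, so it cannot simultaneously assign the correct Gaussian masses to two nested intervals such as $(-\tfrac12,\tfrac12)\subset(-1,1)$; this is easily repaired and does not affect the validity of the proof.
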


There are some standard methods to find the exact formulas 
for the expectation and variance in terms of~$A(1)$,
as well as many methods to prove the real-rootedness of certain polynomials; 
see~\cite{CW10,LW07} for example.
For the problem we are going to handle, we have $A(1)=|\Pi_{n,m,c}|$.
We will use the notation $T_{n}=|\Pi_{n,m,c}|$,
and determine an asymptotic formula for it
up to an error of $O(n^{-1/2}\log^{7/2}{n})$.
This enables us to prove that the variance tends to infinity.
More analytic tools for asymptotics
can be found in~\cite{deBru81B,FS09B,Odl95}.
We should mention that
to include full asymptotic expansions,
Harris and Schoenfeld~\cite{HS68} extended 
the theory of Hayman-admissibility~\cite{Hay56},
and this extended theory is applicable to functions of at least double ``exponential growth'',
which is the case that we treat in this paper.
See also~\cite{DGK05,GM06,SS99}.

In \S\ref{sec:RR:EV}, we give a recurrence relation 
of the cardinality $T_{n}$ 
and thus obtain the exact formulas of the expectation and variance of the
number of non-zero-blocks of a random partition in $\Pi_{n,m,c}$.
In \S\ref{sec:asymptotics}, 
we derive an asymptotic expression of $T_{n}$,
and prove that the centralized and normalized number of zero-blocks 
in a random $(c,m)$-colored $B_n$-partition is asymptotic normal by Theorem~\ref{thm:criterion}.

\section{The recurrence relation}\label{sec:RR:EV}

Let $T_{n,k}$ be the number of $(c,m)$-colored $B_n$-partitions with $k$ non-zero-blocks.
It is clear that $T_{n,0}=c^n$ and $T_{n,n}=1$.
For convenience, let $T_{n,k}=0$ if $k<0$ or $k>n$.
We claim that 
\begin{equation}\label{rec:T}
T_{n,k}=T_{n-1,k-1}+(mk+c)T_{n-1,k}.
\end{equation}
In fact, 
all $(c,m)$-colored $B_{n}$-partitions can be generated by inserting the element $n$ in some color
into a $(c,m)$-colored $B_{n-1}$-partition. Let $\pi$ be a $(c,m)$-colored $B_{n-1}$-partition. 
One may insert the element $n$ into $\pi$ as a singleton non-zero-block.
In this case, $n$ has to be colored in the given first color in $C_2$. 
This way generates all $(c,m)$-colored $B_{n}$-partitions such that~$n$ is a singleton non-zero-block. 
So it contributes the summand~$T_{n-1,k-1}$ to the recurrence~\eqref{rec:T}.
One may also insert $n$ into the zero-block of $\pi$, 
in any color in $C_1$. 
This way generates all $(c,m)$-colored $B_{n}$-partitions such that~$n$ is in the zero-block.
Therefore, it contributes the summand~$cT_{n-1,k}$ to~\eqref{rec:T}.
The last possibility is that we insert $n$ into a non-zero-block of $\pi$.
In this case, we have $k$ non-zero-blocks and $m$ colors in $C_2$ to choose.
So this way generates all $(c,m)$-colored $B_{n}$-partitions such that~$n$ is in a non-zero-block,
and contributes the summand~$mkT_{n-1,k}$ to~\eqref{rec:T}.
This proves the claim.

We mention that $T_{n,k}$ reduces to the Whitney number $W_m(n,k)$ 
of the second kind when $c=1$. This can be seen straightforwardly from the recurrence
\[
W_m(n,k) = W_m(n-1,k-1)+(mk+1)W_m(n -1,k),
\]
with $W_m(n,0)=W_m(n,1)=1$; 
see Benoumhani~\cite[Corollary 3]{Ben96}.

Consider the polynomial
$T_{n}(x)=\sum_{k=0}^nT_{n,k}\,x^k$.
For example,
\begin{align*}
T_{1}(x)&=x+c,\\
T_{2}(x)&=x^2+(m+2c)x+c^2,\\
T_{3}(x)&=x^3+(3m+3c)x^2+(m^2+3cm+3c^2)x+c^3.
\end{align*}
Multiplying~\eqref{rec:T} by $x^k$ and summing it over $k=1,2,\ldots,n-1$, we get
\[
T_{n}(x)=(x+c)T_{n-1}(x)+mxT_{n-1}'(x),\quad n\ge1.
\]

Then we have the following real-rootedness result.

\begin{thm}\label{thm:RR:T} 
For any $n\geq 1$,
the polynomial $T_{n}(x)$ has $n$ distinct negative roots.
\end{thm}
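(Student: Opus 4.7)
The plan is to argue by induction on $n$, using the recurrence $T_n(x)=(x+c)T_{n-1}(x)+mxT_{n-1}'(x)$ derived above together with a standard interlacing argument. The base case $n=1$ is immediate, since $T_1(x)=x+c$ has the single negative root $-c$.

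For the inductive step, suppose $T_{n-1}$ has $n-1$ distinct negative roots $r_1<r_2<\cdots<r_{n-1}<0$. Evaluating the recurrence at each $r_i$, the first term vanishes and we are left with
\[
T_n(r_i)=mr_iT_{n-1}'(r_i).
\]
Because the $r_i$ are simple roots of a monic real polynomial, the values $T_{n-1}'(r_i)$ alternate in sign as $i$ runs from $1$ to $n-1$. Since $mr_i<0$ for every $i$, the values $T_n(r_i)$ therefore also alternate in sign. By the intermediate value theorem, this produces a root of $T_n$ strictly between each consecutive pair $r_i,r_{i+1}$, accounting for $n-2$ distinct negative roots.

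Two more roots remain to be located. On the right end, $T_n(0)=c^n>0$ while the above sign pattern forces $T_n(r_{n-1})<0$ (because $T_{n-1}'(r_{n-1})>0$ and $r_{n-1}<0$), so IVT supplies a root in $(r_{n-1},0)$. On the left end, the monicity and degree of $T_n$ give that $T_n(x)$ has the sign of $(-1)^n$ as $x\to-\infty$, and a direct check shows $T_n(r_1)$ has the opposite sign; hence a further root lies in $(-\infty,r_1)$. This yields $n$ distinct negative roots, which is all roots of $T_n$ since $\deg T_n=n$.

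The only real obstacle is sign bookkeeping: one must verify that $T_{n-1}'(r_i)$ has sign $(-1)^{n-1-i}$ and that this matches up with the sign of $T_n$ near $-\infty$ so that a root below $r_1$ is produced in both parities of $n$. Once those parities are recorded once and for all at the start, no case split on $n$ is needed, and the argument is purely a sign-checking application of IVT.
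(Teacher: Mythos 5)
Your proof is correct. It differs from the paper only in that the paper does not give a self-contained argument at all: it invokes the criterion of Liu and Wang for generalized Sturm sequences, which states that any degree-$n$ polynomial with nonnegative coefficients satisfying $P_n(x)=a_n(x)P_{n-1}(x)+b_n(x)P_{n-1}'(x)$ with $b_n(x)\le 0$ for $x\le 0$ is real-rooted; here $a_n(x)=x+c$ and $b_n(x)=mx$. Your induction-plus-IVT interlacing argument is essentially the mechanism underlying that criterion, specialized to this recurrence, and your sign bookkeeping checks out: $T_{n-1}'(r_i)$ has sign $(-1)^{n-1-i}$, so $T_n(r_i)=mr_iT_{n-1}'(r_i)$ has sign $(-1)^{n-i}$, which alternates, gives $T_n(r_{n-1})<0$ against $T_n(0)=c^n>0$, and is opposite to the sign $(-1)^n$ of $T_n$ near $-\infty$; this yields $n$ roots interlacing with (and strictly containing, on the left) the roots of $T_{n-1}$, all in $(-\infty,0)$. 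What the citation buys is brevity and generality (it covers a whole class of such recurrences at once); what your argument buys is a short elementary proof that additionally exhibits the interlacing of the roots of $T_n$ and $T_{n-1}$, which the paper's statement does not record.
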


Theorem~\ref{thm:RR:T} can be shown by 
using a criterion for generalized Sturm sequences given 
by Liu and Wang~\cite[Corollary 2.4]{LW07}. In fact, they proved a more general result 
that any polynomial $P_n(x)$ of degree $n$ with nonnegative coefficients 
satisfying the recurrence relation
\[
P_n(x) = a_n(x)P_{n-1}(x) + b_n(x)P_{n-1}'(x),
\]
where $a_n(x)$ and $b_n(x)$ are real polynomials 
such that $b_n(x)\le0$ whenever $x\le0$, 
forms a generalized Sturm sequence,
and is therefore real-rooted.

Let $\xi_n$ be the random variable
denoting the number of non-zero-blocks of a partition in $\Pi_{n,m,c}$.
It follows that
\[
P(\xi_n=k)={T_{n,k}\over T_n}.
\]
Therefore, $\xi_n$ is the normalization of $(T_{n,0},\,T_{n,1},\,\ldots,\,T_{n,n})$ 
and thus has a PF distribution by Theorem~\ref{thm:RR:T}.
Denote the expectation of~$\xi_n$ by~$E_n$,
and the variance by~$V_n$.
It is a standard technique~\cite{Sac97B} to find the expectation and variance
by using the formulas
\[
E_n={T_{n}'(1)\over T_{n}}
\rmand
V_n=E_n-E_n^2+\frac{T_{n}''(1)}{T_{n}}.
\]
Note that both
\begin{align*}
T_{n}'(1)=\sum_{k=0}^n kT_{n,k}\quad\text{ and }\quad
T_{n}''(1)=\sum_{k=0}^n k(k-1)T_{n,k}
\end{align*}
can be expressed in terms of $T_{n}$ with the aid of the recurrence~\eqref{rec:T}.
Routine computations give the exact formulas for the expectation and variance of $\xi_n$.

\begin{thm}\label{thm_EV_M}
We have
\[
E_n={T_{n+1}\over mT_{n}}-{1+c\over m}
\rmand
V_n={T_{n+2}\over m^2T_{n}}-{T_{n+1}^2\over m^2T_{n}^2}-{1\over m}.
\]
\end{thm}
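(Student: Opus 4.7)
The strategy is exactly the one indicated just before the statement: extract $T_n'(1)$ and $T_n''(1)$ from the differential recurrence
\[
T_{n}(x)=(x+c)T_{n-1}(x)+mx\,T_{n-1}'(x),
\]
and then plug the results into the standard identities $E_n=T_n'(1)/T_n$ and $V_n=T_n''(1)/T_n+E_n-E_n^2$ (recall that $\sum_k k^2 a_k=P''(1)+P'(1)$ when $P(z)=\sum a_k z^k$).

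The first step is to evaluate the recurrence at $x=1$, giving $T_n=(1+c)T_{n-1}+m\,T_{n-1}'(1)$, hence $T_{n-1}'(1)=(T_n-(1+c)T_{n-1})/m$. Shifting $n\mapsto n+1$ yields
\[
T_n'(1)=\frac{T_{n+1}-(1+c)T_n}{m},
\]
which immediately produces the claimed formula for $E_n$ after dividing by $T_n$.

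For the second derivative I would differentiate the recurrence once to obtain
\[
T_n'(x)=T_{n-1}(x)+(x+c+m)T_{n-1}'(x)+mx\,T_{n-1}''(x),
\]
evaluate at $x=1$, solve for $T_{n-1}''(1)$, and shift indices. Substituting the already-computed values of $T_n'(1)$ and $T_{n+1}'(1)$ expresses $T_n''(1)$ as a linear combination of $T_n,T_{n+1},T_{n+2}$ with coefficients depending only on $c$ and $m$. Dividing by $T_n$ and combining with $E_n-E_n^2$ then yields $V_n$; a short check shows that all terms involving $T_{n+1}/T_n$ alone cancel, the $T_{n+1}^2/T_n^2$ and $T_{n+2}/T_n$ terms survive with coefficient $1/m^2$ (with opposite signs), and the remaining constants collapse to $-1/m$.

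No step poses any conceptual difficulty; the only potential pitfall is arithmetic bookkeeping in the cancellation of the $T_{n+1}/T_n$ coefficient and of the constant term in~$V_n$, which is why the authors dispatch it simply as ``routine computations''. Accordingly, I would present the derivation of $T_n'(1)$ and $T_n''(1)$ explicitly and leave the final algebraic simplification as a direct verification.
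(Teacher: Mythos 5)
Your proposal is correct and follows exactly the route the paper takes (the paper itself only says "routine computations" after setting up $E_n=T_n'(1)/T_n$ and $V_n=E_n-E_n^2+T_n''(1)/T_n$ and noting that $T_n'(1)$, $T_n''(1)$ follow from the recurrence). Your intermediate formulas check out: $T_n'(1)=(T_{n+1}-(1+c)T_n)/m$ and $T_n''(1)=\bigl(T_{n+2}-(2+2c+m)T_{n+1}+((1+c)^2+mc)T_n\bigr)/m^2$, after which the $T_{n+1}/T_n$ terms and the constants cancel exactly as you claim.
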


In view of Theorem~\ref{thm:criterion} and the above expression of $V_n$,
we are led to find an asymptotic expression for $T_{n}$, 
which will be dealt with in the next section.

\section{An Asymptotic expression of the total number $T_{n}$}\label{sec:asymptotics}

In this section,
we will find an asymptotic expression for the total cardinality $T_{n}$
by using the classical saddle point method,
which is due to Schr\"odinger~\cite{Sch44}.

First,
we seek the generating function 
\[
F(z)=\sum_{n\ge0}T_{n}\,{z^n\over n!}.
\]
In fact, for any $0\le k\le n$, one may choose~$k$ elements from~$[n]$
to form the non-zero-blocks of a colored $B_n$-partition, 
and let the other $n-k$ elements constitute the zero-block. 
Then, for any non-zero-block of size~$s$,
there are $m^{s-1}$ distinct colorings for that block.
And for the zero-block of size $n-k$,
there are $c^{n-k}$ ways of coloring its elements.
Therefore,
\[
T_{n}=\sum_{k=0}^n{n\choose k}c^{n-k}\sum_{j=0}^km^{k-j}S(k,j),
\]
where $S(k,j)$ is the Stirling numbers of the second kind.
By using the generating function
\[
\sum_{n}S(n,k){x^n\over n!}={1\over k!}\left(e^x-1\right)^k
\]
of Stirling numbers  (see~\cite[page 34]{Sta97}),
we deduce
\begin{equation}\label{egf:T}
F(z)=\exp\Bigl(\frac{e^{mz}-1}{m}+cz\Bigr).
\end{equation}

The {\em saddle point} of $F(z)$ is defined to be
the value $z$ that minimizes $z^{-n}F(z)$,
i.e., the unique positive solution $r$ of the equation
\[
r\bigl(e^{mr}+c\bigr)=n.
\]
Similar to the Lambert $W$ function $r=W(n)$ defined by $re^r=n$ (see~\cite{CGHJK96}),
it is easy to see that 
\begin{equation}\label{sim:r}
\begin{aligned}
r&={\log{n}\over m}\biggl(1+O\Bigl({\log\log{n}\over\log{n}}\Bigr)\biggr),\\
e^{mr}&={mn\over\log{n}}\biggl(1+O\Bigl({\log\log{n}\over\log{n}}\Bigr)\biggr).
\end{aligned}
\end{equation}
Note that
\[
\Bigl({n\over r}\Bigr)^n
=\exp\bigl(n\log(e^{mr}+c)\bigr)
=\exp\bigl(mrn+n\log(1+ce^{-mr})\bigr).
\]
Since $\log(1+x)=x+O(x^2)$ as $x\to 0$, we infer that
\begin{equation}\label{eq:n/r}
\Bigl({n\over r}\Bigr)^n
=e^{mrn+cr}\biggl(1+O\Bigl({\log^2n\over n}\Bigr)\biggr).
\end{equation}
Now we are in a position to give an estimate of $T_{n}$.

\begin{thm}\label{thm:sim:T}
We have
\[
T_{n}=\frac{1}{\sqrt{mr+1}}
\exp\biggl(mnr-n+\frac{n}{mr}+2cr-{1+c\over m}\biggr)
\cdot\biggl(1+O\Bigl(\frac{\log^{7/2}{n}}{\sqrt{n}}\Bigr)\biggr),
\]
where $r$ is the unique positive solution of the equation
$r\bigl(e^{mr}+c\bigr)=n$.
\end{thm}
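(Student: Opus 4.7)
The plan is to apply the classical saddle point method to the exponential generating function~\eqref{egf:T}. By Cauchy's integral formula on the circle $|z|=r$,
\[
T_n = \frac{n!}{2\pi r^n}\int_{-\pi}^{\pi} F(re^{i\theta})\,e^{-in\theta}\,d\theta,
\]
and the positive root~$r$ of $r(e^{mr}+c)=n$ is precisely the positive saddle point of $z^{-n}F(z)$, since differentiating $\log F(z)-n\log z$ and evaluating on the positive axis yields this equation. I would split the integration range at some $\theta_0$, of order roughly $r^{-1}\sqrt{\log n/n}$ up to logarithmic factors chosen to balance the two error sources, treating the central arc $|\theta|\le\theta_0$ as the Gaussian main term and bounding the complementary tails.

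For the central arc, set $H(\theta):=\log F(re^{i\theta})-in\theta$. Direct differentiation gives $H'(0)=i(r(e^{mr}+c)-n)=0$ by the saddle point equation, while
\[
H''(0) = -r(e^{mr}+c) - mr^2 e^{mr} = -\bigl(n+mr^2 e^{mr}\bigr),
\]
which, using $re^{mr}=n-cr$, satisfies $n+mr^2e^{mr}=n(mr+1)-mcr^2\sim n(mr+1)$. Higher-order derivatives are of size $O(m^{k-1}r^{k-1}n)$, so a standard estimate of the cubic Taylor remainder on a Gaussian window of width $1/\sqrt{n(mr+1)}$ yields
\[
\int_{|\theta|\le\theta_0}\frac{F(re^{i\theta})}{F(r)}\,e^{-in\theta}\,d\theta
= \sqrt{\frac{2\pi}{n(mr+1)}}\,\Bigl(1+O\bigl(n^{-1/2}\log^{7/2}n\bigr)\Bigr).
\]

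The main obstacle is the tail estimate, which must be shown to be negligible compared with this error. From $|F(re^{i\theta})|=\exp\bigl(m^{-1}(e^{mr\cos\theta}\cos(mr\sin\theta)-1)+cr\cos\theta\bigr)$, I would handle the tails in two zones: an inner zone where $mr|\theta|$ is bounded and a Taylor expansion of $\cos(mr\sin\theta)$ gives a quadratic decay $|F(re^{i\theta})|\le F(r)\,e^{-c_1 n\theta^2}$, and an outer zone where $\cos\theta$ is bounded away from~$1$ so that the dominant factor $e^{mr\cos\theta}$ already produces a saving of super-polynomial order $e^{-c_2 n/\log^2 n}$ uniformly over the arc. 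Combining the two zones shows that the tail contribution is negligible with respect to the announced error. It remains to assemble the answer: applying Stirling's formula $n!\sim\sqrt{2\pi n}(n/e)^n$, the estimate~\eqref{eq:n/r} for $(n/r)^n$, and the identities $F(r)=\exp((e^{mr}-1)/m+cr)$ together with $e^{mr}/m=n/(mr)-c/m$, one reorganizes the exponent into $mnr-n+n/(mr)+2cr-(1+c)/m$, which yields the stated asymptotic.
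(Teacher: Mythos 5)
Your proposal is correct, and it is in fact more of a proof than the paper itself gives: the paper justifies Theorem~\ref{thm:sim:T} by a one-sentence appeal to the Hayman--Harris--Schoenfeld admissibility theory of~\cite{HS68}, merely remarking that a direct saddle-point argument with \eqref{egf:T} and \eqref{eq:n/r} is also possible. You carry out that direct argument, and the computations check: $r(e^{mr}+c)=n$ is indeed the critical-point equation for $z^{-n}F(z)$; $H''(0)=-(n+mr^{2}e^{mr})=-(n(mr+1)-mcr^{2})\sim-n(mr+1)$, which after Stirling's $\sqrt{2\pi n}$ yields the prefactor $(mr+1)^{-1/2}$ up to a relative error $O(r^{2}/n)$; and the exponent assembles correctly from \eqref{eq:n/r} together with $F(r)=\exp\bigl(n/(mr)+cr-(1+c)/m\bigr)$, which follows from $e^{mr}=n/r-c$. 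Your error bookkeeping is also consistent: with $\theta_{0}\asymp\log n/\sqrt{nmr}$ the cubic remainder $|H'''(0)|\theta_{0}^{3}\asymp nm^{2}r^{2}\cdot(\log n)^{3}(nmr)^{-3/2}\asymp n^{-1/2}\log^{7/2}n$ reproduces exactly the stated error, while the truncation loss $e^{-c\log^{2}n}$ is negligible. The one point to tighten is the tail split: as written, the inner zone ($mr|\theta|$ bounded) and the outer zone ($\cos\theta$ bounded away from $1$) leave uncovered the intermediate range $1/(mr)\lesssim|\theta|\le\theta_{1}$, where $mr\sin\theta$ is large and $\cos(mr\sin\theta)$ oscillates, so neither of your two stated mechanisms applies verbatim. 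The fix is to discard the oscillating factor via $\cos(mr\sin\theta)\le1$ and use $1-\cos\theta\ge2\theta^{2}/\pi^{2}$, which gives $\log|F(re^{i\theta})|-\log F(r)\le-c\,re^{mr}\theta^{2}\le-c'n/\log^{2}n$ throughout that range; this matches the uniform superpolynomial saving you announce, but it needs to be asserted for the full complement of the central arc, not only where $\cos\theta$ is bounded away from $1$. With that patch the argument is complete. Compared with the paper's route, your explicit computation is self-contained and makes visible where each term of the exponent comes from; the admissibility route buys a full asymptotic expansion without tuning $\theta_{0}$, at the cost of verifying the admissibility hypotheses for a function of doubly exponential growth.
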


The proof of Theorem~\ref{thm:sim:T} follows directly from the theory
of Hayman-admissibility, see~\cite{HS68}. 
In fact, it can be also directly proved by the saddle point method
with aid of~\eqref{egf:T} and~\eqref{eq:n/r}.

By Theorem~\ref{thm_EV_M} and Theorem~\ref{thm:sim:T}, 
it is easy to deduce that
\[
E_n\sim{n\over \log{n}}.
\]
For the sake of estimating $V_n$,
we need one more lemma.

\begin{lem}\label{lem1}
Let~$c$ be a nonnegative integer, and $f(x)=x\bigl(e^{mx}+c\bigr)$.
For any $i=0$, $1$, $2$,
let $t_i$ be the unique positive number such that $f(t_i)=n+i$.
Then we have
\begin{align}
&t_1-t_0={1\over mn}-{1\over m^2 n t_0}+O\Bigl({1\over n\log^2{n}}\Bigr),
\label{61}\\[5pt]
&t_2-t_1={1\over mn}-{1\over m^2 n t_0}+O\Bigl({1\over n\log^2{n}}\Bigr),
\label{62}\\[5pt]
&2t_1-t_0-t_2={1\over m n^2}+O\Bigl({1\over n^2\log{n}}\Bigr),\label{63}\\[5pt]
&{1\over t_0}+{1\over t_2}-{2\over t_1}=O\Bigl({1\over n^2\log^2{n}}\Bigr).\label{64}
\end{align}
\end{lem}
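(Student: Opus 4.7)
The plan is to derive all four estimates by Taylor expansion of $f$ (and its inverse) combined with the defining relation $f(t_i)=n+i$, which allows elimination of $e^{mt_i}$ from every derivative of $f$. Differentiating $f(x)=x(e^{mx}+c)$ and substituting $e^{mt_i}=(n+i)/t_i-c$ yields
\[
f'(t_i)=m(n+i)+\frac{n+i}{t_i}-mct_i\quad\text{and}\quad f^{(k)}(t_i)\sim m^k n\quad(k\ge1).
\]
Moreover, \eqref{sim:r} applied with $n$ replaced by $n+i$ gives $t_i=(\log n)/m+O(\log\log n/\log n)$, hence $t_1,t_2=t_0+O(1/n)$ and $1/t_1,1/t_2=1/t_0+O(1/(n\log^2 n))$.

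For \eqref{61}, I would Taylor-expand $1=f(t_1)-f(t_0)=\delta_0 f'(t_0)+\delta_0^2 f''(\xi)/2$ with $\delta_0=t_1-t_0$, obtaining $\delta_0=1/f'(t_0)+O(f''/(f')^3)=1/f'(t_0)+O(1/n^2)$. Expanding $1/f'(t_0)$ as a geometric series in the small quantity $1/(mt_0)-ct_0/n=O(1/\log n)$ then produces the two leading terms $1/(mn)-1/(m^2 nt_0)$ with error $O(1/(n\log^2 n))$. The same computation with $t_0\leftrightarrow t_1$ and $n\leftrightarrow n+1$ proves \eqref{62}, once one checks $1/(m(n+1))=1/(mn)+O(1/n^2)$ and $1/(m^2(n+1)t_1)-1/(m^2 nt_0)=O(1/(n^2\log n))$; both follow from the identity
\[
\frac{1}{(n+1)t_1}-\frac{1}{nt_0}=\frac{nt_0-(n+1)t_1}{n(n+1)t_0 t_1}
\]
together with $|nt_0-(n+1)t_1|\le n\delta_0+t_1=O(\log n)$.

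The delicate step is \eqref{63}: since the individual asymptotics \eqref{61} and \eqref{62} match to order $O(1/(n\log^2 n))$, direct subtraction loses precision. My plan is instead to apply Taylor's theorem to $f$ at the midpoint $t_1$: from $f(t_0)+f(t_2)=2(n+1)=2f(t_1)$ and second-order expansion we obtain
\[
0=(\delta_1-\delta_0)f'(t_1)+\tfrac{1}{2}(\delta_0^2+\delta_1^2)f''(t_1)+O\bigl((\delta_0^3+\delta_1^3)f'''(\eta)\bigr),
\]
so $2t_1-t_0-t_2=\delta_0-\delta_1=(\delta_0^2+\delta_1^2)f''(t_1)/(2f'(t_1))+O(1/n^3)$. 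Substituting $\delta_i\sim 1/(mn)$ and $f''(t_1)/f'(t_1)\sim m$ gives the main term $1/(mn^2)$ with error $O(1/(n^2\log n))$. The main obstacle is arranging this cancellation so that the cubic Taylor remainder is absorbed into the claimed error bound.

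For \eqref{64}, I would set $\phi=f^{-1}$ and $\psi(s)=1/\phi(s)$, so the expression becomes the centered second difference $\psi(n)+\psi(n+2)-2\psi(n+1)$, bounded by $\sup_{[n,n+2]}|\psi''|$ via two-fold integration. Elementary calculus gives $\psi''=-\phi''/\phi^2+2(\phi')^2/\phi^3$ with $\phi'=1/f'(\phi)$ and $\phi''=-f''(\phi)/f'(\phi)^3$; the size estimates above then show each term of $\psi''$ is $O(1/(n^2\log^2 n))$, as required.
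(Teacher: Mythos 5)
Your argument is correct, and the quantitative checks all go through: the a priori bound $t_1-t_0=O(1/n)$ needed to control the Taylor remainder follows from $f'\ge f'(t_0)\sim mn$ on $[t_0,t_1]$; the substitution $e^{mt_0}=n/t_0-c$ turns $f'(t_0)$ into $mn\bigl(1+\tfrac{1}{mt_0}-\tfrac{ct_0}{n}\bigr)$, whose reciprocal expands to the two stated terms plus $O(1/(n\log^2 n))$; and for \eqref{63} the identity $f(t_0)+f(t_2)=2f(t_1)$ together with $f''(t_1)/f'(t_1)=m(1+O(1/\log n))$ and $\delta_i=\tfrac{1}{mn}(1+O(1/\log n))$ gives the main term $1/(mn^2)$ with a cubic remainder of size only $O(1/n^3)$. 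Your route differs from the paper's mainly in packaging. For \eqref{61}--\eqref{62} the paper sandwiches $t_1-t_0$ between $1/f'(t_1)$ and $1/f'(t_0)$ via Cauchy's mean value theorem and verifies the expansion of each bound by an explicit rational-function computation; you Taylor-expand once and invert a series, which is the same content. For \eqref{63}--\eqref{64}, where both you and the paper correctly identify that subtracting \eqref{61} from \eqref{62} loses too much precision, the paper proves an auxiliary mean-value statement for second divided differences (its Proposition~\ref{prop:tech}) and applies it to $f$ at $t_0,t_1,t_2$ and to a transformed function at $1/t_0,1/t_1,1/t_2$; your midpoint Taylor expansion for \eqref{63} and your integral representation of the second difference of $\psi=1/f^{-1}$ for \eqref{64} carry exactly the same second-derivative information without the auxiliary proposition. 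Your treatment of \eqref{64} is arguably cleaner, since it bypasses the paper's somewhat ad hoc choice of the transformed function $h(x)=x^{-1}\exp(m/x+c)$ and reduces everything to the transparent bounds $\phi'=1/f'(\phi)=O(1/n)$ and $\phi''=-f''(\phi)/f'(\phi)^3=O(1/n^2)$; the one point you should make explicit is that these bounds hold uniformly on $[n,n+2]$ because $\phi$ maps that interval into $[t_0,t_2]$, where $f'\sim mn$ and $f''\sim m^2n$.
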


\begin{proof} First we prove~\eqref{61}.
By Cauchy's mean value theorem,
there exists $t\in(t_0,\,t_1)$ such that
\[
f(t_1)-f(t_0)=(t_1-t_0)f'(t).
\]
Since $f(t_1)-f(t_0)=1$ and $f'(t)=(mt+1)e^{mt}+c$, we have
\begin{equation}\label{56}
\frac{1}{(mt_1+1)e^{mt_1}+c}\le t_1-t_0\le\frac{1}{(mt_0+1)e^{mt_0}+c}.
\end{equation}
We can compute that
\[
\frac{1}{(mt_0+1)e^{mt_0}+c}-{1\over mn}+{1\over m^2nt_0}
={n+cm^2t_0^3-cmt_0^2\over m^2nt_0(mnt_0+n-cmt_0^2)}.
\]
Since $t_0=O(\log{n})$, we have
\begin{equation}\label{pf:lem1}
\frac{1}{(mt_0+1)e^{mt_0}+c}
={1\over mn}-{1\over m^2nt_0}+O\Bigl({1\over n\log^2{n}}\Bigr).
\end{equation}
On the other hand, we can compute
\[
\frac{1}{(mt_1+1)e^{mt_1}+c}-{1\over mn}+{1\over m^2nt_0}
={(n+1)(1+mt_1-mt_0)+m^2t_0t_1^2c-mt_1^2c-m^2t_0t_1
\over m^2nt_0(mnt_1+n+1+mt_1-cmt_1^2)}.
\]
By~\eqref{56} and~\eqref{pf:lem1}, 
we have $t_1-t_0=O(1/n)$.
Since $t_1=O(\log{n})$, we deduce that
\begin{equation}\label{pf:lem2}
\frac{1}{(mt_1+1)e^{mt_1}+c}
={1\over mn}-{1\over m^2nt_0}+O\Bigl({1\over n\log^2{n}}\Bigr).
\end{equation}
In view of~\eqref{56}, \eqref{pf:lem1} and~\eqref{pf:lem2},  
we infer that
\[
t_1-t_0={1\over mn}-{1\over m^2nt_0}+O\Bigl({1\over n\log^2{n}}\Bigr).
\]
Along the same line, we can prove that
\[
t_2-t_1={1\over mn}-{1\over m^2 n t_1}+O\Bigl({1\over n\log^2{n}}\Bigr).
\]
Since
\[
{1\over m^2 n t_1}-{1\over m^2 n t_0}
={t_0-t_1\over m^2nt_0t_1}
=O\Bigl({1\over n^2\log^2 n}\Bigr),
\]
we get the estimate~\eqref{62}.
We pause here for the following proposition,
which will imply the last two approximations.

\begin{prop}\label{prop:tech}
Let $h(x)$ be a continuous function defined on the closed
interval $[a,b]$. Suppose that $h''(x)$ exists
in the open interval $(a,b)$. Then for any $c\in(a,b)$,
there exists $s\in(a,b)$ such that
\begin{equation}\label{eq}
{h(a)\over(a-b)(a-c)}+{h(b)\over(b-a)(b-c)}
+{h(c)\over(c-a)(c-b)}={h''(s)\over 2}.
\end{equation}
\end{prop}

\begin{proof}
Let
\begin{align*}
f_1(x)&=(a-b)h(x)+(b-x)h(a)+(x-a)h(b),\\[5pt]
g_1(x)&=(a-b)(x-a)(x-b).
\end{align*}
Then the left hand side of~\eqref{eq}
becomes $f_1(c)/g_1(c)$. Note that $f_1(a)=g_1(a)=0$.
By Cauchy's mean value theorem, there exists $s_1\in(a,c)$ such that
\[
{f_1(c)\over g_1(c)}={f_1(c)-f_1(a)\over g_1(c)-g_1(a)}={f_1'(s_1)\over g_1'(s_1)}
={f_2(b)-f_2(a)\over g_2(b)-g_2(a)},
\]
where $f_2(x)=h(x)-h'(s_1)x$ and $g_2(x)=x^2-2s_1x$.
Again, by Cauchy's mean value theorem,
there exist $s_2\in(a,b)$ and $s\in(a,b)$ such that
\[
{f_2(a)-f_2(b)\over g_2(a)-g_2(b)}={f_2'(s_2)\over g_2'(s_2)}
={h'(s_2)-h'(s_1)\over 2s_2-2s_1}={h''(s)\over 2}.
\]
This completes the proof of Proposition~\ref{prop:tech}.
\end{proof}

Now we continue the proof of Lemma~\ref{lem1}. 
Set $h(x)=f(x)$, $a=t_0$, $b=t_2$, and $c=t_1$ in Proposition~\ref{prop:tech}. 
Multiplying the formula~\eqref{eq} by $(t_1-t_0)(t_2-t_1)(t_2-t_0)$ gives
\[
2t_1-t_0-t_2={1\over 2}f''(s)(t_1-t_0)(t_2-t_1)(t_2-t_0).
\]
In view of~\eqref{61} and~\eqref{62}, we deduce~\eqref{63} immediately from the above formula.
The estimate~\eqref{64} can be derived along the same line, 
by taking 
\[
h(x)={1\over x}\exp\biggl({m\over x}+c\biggr),\qquad
a={1\over t_2},\qquad
b={1\over t_0},\qquad
c={1\over t_1}
\] 
in Proposition~\ref{prop:tech}.
This completes the proof of Lemma~\ref{lem1}.
\end{proof}

Here is the main result of this paper.
\begin{thm}\label{thm:NLD:xi}
The random variable
$(\xi_n-E_n)/\sqrt{V_n}$
has an asymptotically standard normal distribution as~$n$~tends to
infinity. 
\end{thm}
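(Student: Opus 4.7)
The plan is to apply Theorem~\ref{thm:criterion} to $S_n=\xi_n$. Since $T_n(x)$ is real-rooted by Theorem~\ref{thm:RR:T}, its coefficient sequence is PF and $\xi_n$ has a PF distribution, so the problem reduces to showing $V_n\to\infty$.

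Using Theorem~\ref{thm_EV_M} I would rewrite
\[
V_n+\frac{1}{m}=\Bigl(\frac{T_{n+1}}{mT_n}\Bigr)^{\!2}\Bigl(\frac{T_n T_{n+2}}{T_{n+1}^{2}}-1\Bigr)
\]
and evaluate each factor asymptotically via Theorem~\ref{thm:sim:T}. Let $t_i$ denote the saddle corresponding to $T_{n+i}$ and put $H(N,r)=mNr-N+N/(mr)+2cr-(1+c)/m$, so that $\log T_{n+i}=H(n+i,t_i)-\tfrac12\log(mt_i+1)+O(\log^{7/2}n/\sqrt n)$. For the first factor, the estimate $t_1-t_0=1/(mn)-1/(m^{2}nt_0)+O(1/(n\log^{2}n))$ from Lemma~\ref{lem1}, combined with $e^{mt_1}=(n+1)/t_1-c\sim mn/\log n$, forces $T_{n+1}/(mT_n)\sim n/\log n$; in particular this factor grows like $n^{2}/\log^{2}n$.

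For the second factor I would expand the second difference
\[
H(n+2,t_2)+H(n,t_0)-2H(n+1,t_1)
\]
as a linear combination of $n(t_0+t_2-2t_1)$, $t_2-t_1$, $n(1/t_0+1/t_2-2/t_1)$ and $1/t_2-1/t_1$. Each of these is exactly one of the quantities controlled by Lemma~\ref{lem1}: the contributions from $2m(t_2-t_1)$ and $mn(t_0+t_2-2t_1)$ add to $1/n$ up to $O(1/(n\log n))$, while the remaining pieces are $O(1/(n\log^{2}n))$. Consequently $T_n T_{n+2}/T_{n+1}^{2}-1\sim 1/n$, and multiplying by the first factor gives $V_n\sim n/\log^{2}n\to\infty$, whereupon Theorem~\ref{thm:criterion} delivers the claim.

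The main obstacle is quantitative: the relative error $O(\log^{7/2}n/\sqrt n)$ in Theorem~\ref{thm:sim:T} is much larger than the $1/n$ target for the second factor, so one cannot simply plug Theorem~\ref{thm:sim:T} into $T_n$, $T_{n+1}$, $T_{n+2}$ independently and subtract. The rescue is that this error arises from the Hayman-admissible saddle-point expansion and is smooth enough in $n$ that its second difference is negligible compared to $1/n$; carrying the expansion one order further, as in Harris--Schoenfeld, makes this precise. Once $V_n\to\infty$ is established in this way, Theorem~\ref{thm:criterion} immediately yields the asymptotic normality.
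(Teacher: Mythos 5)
Your proposal follows essentially the same route as the paper: reduce to showing $V_n\to\infty$ via Theorem~\ref{thm:criterion} and Theorem~\ref{thm:RR:T}, then estimate $T_{n+2}/T_n-T_{n+1}^2/T_n^2$ by applying Theorem~\ref{thm:sim:T} at the three saddle points $t_0,t_1,t_2$ and controlling the resulting combinations with Lemma~\ref{lem1}. Your factorization $V_n+\tfrac1m=\bigl(\tfrac{T_{n+1}}{mT_n}\bigr)^2\bigl(\tfrac{T_nT_{n+2}}{T_{n+1}^2}-1\bigr)$ is only cosmetically different from the paper's decomposition $e^A-e^B=(A-B)e^C$: your second difference $H(n+2,t_2)+H(n,t_0)-2H(n+1,t_1)$ is literally the paper's $A-B$, and both arguments end with $(n^2/\log^2 n)\cdot(1/n)=n/\log^2 n$.

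The substantive issue is the ``obstacle'' you flag, and you are right that it is genuine --- indeed it afflicts the paper's own proof, not just yours. The quantity $T_nT_{n+2}/T_{n+1}^2-1\sim 1/n$ (equivalently $e^A-e^B\asymp n/\log^2 n$) sits far below the resolution of Theorem~\ref{thm:sim:T}: each of $T_{n+2}/T_n$ and $T_{n+1}^2/T_n^2$ is of size $\asymp n^2/\log^2 n$ and is known only to relative error $O(\log^{7/2}n/\sqrt n)$, so naive subtraction leaves an absolute error $O(n^{3/2}\log^{3/2}n)$ that swamps the main term. This is exactly what the paper does in passing from~\eqref{75} to~\eqref{72}, where the relative error $O(\log^{7/2}n/\sqrt n)$ is carried through the subtraction as though the difference were comparable in size to the minuends; that step is not justified by the stated form of Theorem~\ref{thm:sim:T}. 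Your proposed cure --- that the $O$-term is a smooth enough function of $n$ that its first and second discrete differences are $o(1/n)$ relatively, to be certified by pushing the Harris--Schoenfeld expansion one order further --- is the right idea, but as written it is an assertion rather than a proof: you would need to actually exhibit the next-order term (or otherwise bound the discrete differences of the error), since smoothness of an $O(\cdot)$ bound in $n$ is not automatic. Until that is done your argument is incomplete at precisely this point, though no more so than the published one.
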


\begin{proof}
By Theorem~\ref{thm:criterion} and Theorem~\ref{thm:RR:T}, 
it suffices to prove 
$\lim_{n\to\infty}V_n=\infty$, that is,
\[
\lim_{n\to\infty}{1\over m^2}\biggl({T_{n+2}\over T_{n}}-{T_{n+1}^2\over T_{n}^2}-m\biggr)=\infty.
\]
For $i=0,1,2$, suppose that
\[
t_i(e^{mt_i}+c)=n+i.
\]
By Theorem~\ref{thm:sim:T}, we have
\[
T_{n+i}=\frac{1}{\sqrt{mt_i+1}}
\exp\biggl(m(n+i)t_i-(n+i)+\frac{n+i}{mt_i}+2ct_i-{1+c\over m}\biggr)
\cdot\biggl(1+O\Bigl(\frac{\log^{7/2}{n}}{\sqrt{n}}\Bigr)\biggr).
\]
Then we can compute 
\[
{T_{n+2}\over T_{n}}
=\sqrt{mt_0+1\over mt_2+1}e^A\cdot\biggl(1+O\Bigl({\log^{7/2}{n}\over\sqrt{n}}\Bigr)\biggr),
\]
where 
\begin{equation}\label{pf:A}
A=2mt_2+{2\over m t_2}+(mn+2c)(t_2-t_0)-{n\over m}\Bigl({1\over t_0}-{1\over t_2}\Bigr)-2.
\end{equation}
Similarly, we have
\[
{T_{n+1}^2\over T_{n}^2}
={mt_0+1\over mt_1+1}e^B\biggl(1+O\Bigl({\log^{7/2}{n}\over\sqrt{n}}\Bigr)\biggr),
\]
where
\begin{equation}\label{pf:B}
B=2mt_1+{2\over mt_1}+(2mn+4c)(t_1-t_0)-{2n\over m}\Bigl({1\over t_0}-{1\over t_1}\Bigr)-2.
\end{equation}
Therefore, 
\begin{equation}\label{75}
{T_{n+2}\over T_{n}}-{T_{n+1}^2\over T_{n}^2}
=\left(\sqrt{mt_0+1\over mt_2+1}e^A-{mt_0+1\over mt_1+1}e^B\right)
\biggl(1+O\Bigl({\log^{7/2}{n}\over\sqrt{n}}\Bigr)\biggr).
\end{equation}
By the estimates~\eqref{61} and~\eqref{62}, we have
\begin{align*}
{mt_0+1\over mt_1+1}
&=1+O\Bigl({1\over n\log{n}}\Bigr),\\[5pt]
\sqrt{mt_0+1\over mt_2+1}
&=1+O\Bigl({mt_0+1\over mt_2+1}-1\Bigr)
=1+O\Bigl({1\over n\log{n}}\Bigr).
\end{align*}
Substituting the above estimates into~\eqref{75}, we get
\begin{equation}\label{72}
{T_{n+2}\over T_{n}}-{T_{n+1}^2\over T_{n}^2}
=\bigl(e^A-e^B\bigr)\biggl(1+O\Bigl({\log^{7/2}{n}\over\sqrt{n}}\Bigr)\biggr).
\end{equation}
By Cauchy's mean value theorem, there exists a constant $C$ such that $B<C<A$ and
\begin{equation}\label{73}
e^A-e^B=(A-B)e^C.
\end{equation}
We can compute directly from~\eqref{pf:A} and~\eqref{pf:B} that
\begin{equation}\label{pf:A-B}
A-B=2m(t_2-t_1)-(mn+2c)(2t_1-t_0-t_2)+{2\over m}\Bigl({1\over t_2}-{1\over t_1}\Bigr)
+{n\over m}\Bigl({1\over t_0}+{1\over t_2}-{2\over t_1}\Bigr).
\end{equation}
By Lemma~\ref{lem1}, we deduce that
\begin{align*}
2m(t_2-t_1)&={2\over n}+O\Bigl({1\over n\log{n}}\Bigr),\\[5pt]
-(mn+2c)(2t_1-t_0-t_2)&=-{1\over n}+O\Bigl({1\over n\log{n}}\Bigr),\\[5pt]
{2\over m}\Bigl({1\over t_2}-{1\over t_1}\Bigr)
&=O\Bigl({1\over n\log^2{n}}\Bigr),\\[5pt]
{n\over m}\Bigl({1\over t_0}+{1\over t_2}-{2\over t_1}\Bigr)
&=O\Bigl({1\over n\log^2{n}}\Bigr).
\end{align*}
Substituting them into~\eqref{pf:A-B}, we get
\begin{equation}\label{74}
A-B={1\over n}\biggl(1+O\Bigl({1\over \log{n}}\Bigr)\biggr).
\end{equation}
Similarly, we can derive that
\begin{align*}
A&=2mt_2+O\Bigl({1\over \log{n}}\Bigr),\\[5pt]
B&=2mt_1+O\Bigl({1\over \log{n}}\Bigr).
\end{align*}
By the estimates~\eqref{sim:r}, we find that
\begin{align*}
e^A&=e^{2mt_2}\biggl(1+O\Bigl({1\over \log{n}}\Bigr)\biggr)
={m^2n^2\over\log^2{n}}\biggl(1+O\Bigl({\log\log{n}\over\log{n}}\Bigr)\biggr),\\
e^B&=e^{2mt_1}\biggl(1+O\Bigl({1\over \log{n}}\Bigr)\biggr)
={m^2n^2\over\log^2{n}}\biggl(1+O\Bigl({\log\log{n}\over\log{n}}\Bigr)\biggr).
\end{align*}
Since $B<C<A$, we have
\begin{equation}\label{e^C_M}
e^C={m^2n^2\over\log^2{n}}\biggl(1+O\Bigl({\log\log{n}\over\log{n}}\Bigr)\biggr).
\end{equation}
Substituting~\eqref{e^C_M} and~\eqref{74} into~\eqref{73}, we deduce that
\begin{equation}\label{71}
e^A-e^B={m^2n\over\log^2{n}}\biggl(1+O\Bigl({\log\log{n}\over\log{n}}\Bigr)\biggr).
\end{equation}
Substituting~\eqref{71} into~\eqref{72},
and in view of Theorem~\ref{thm_EV_M},
we obtain the approximation 
\[
V_n={1\over m^2}\biggl({T_{n+2}\over T_{n}}-{T_{n+1}^2\over T_{n}^2}\biggr)-{1\over m}
\sim {n\over\log^2{n}}.
\]
This completes the proof.
\end{proof}

\noindent{\bf Acknowledgments.}
The author is grateful to the anonymous referee 
for the suggestion of generalizing the original $c$-version problem to
the present $(c,m)$-version problem,
and for bringing Dowling's paper to attention.
Thanks are also given to an anonymous referee who pointed out 
that the proof for the asymptotic expression of $T_{n}$ follows directly 
from the theory of Hayman-admissibility.
This work was supported by
the National Natural Science Foundation of China
(Grant No.~$11101010$).

\end{document}